\newtheorem{theorem}{Theorem}[section] %(If you want theorem numbered
\newtheorem{lemma}[theorem]{Lemma}%               with section number.  Same
\newtheorem{proposition}[theorem]{Proposition}
\theoremstyle{definition}
\newtheorem{question}{Question}
\theoremstyle{remark}
\newtheorem{remark}{Remark}
\begin{document}

\title[Simultaneous nonvanishing of the correlation coefficient]{Simultaneous nonvanishing of the correlation constant}

\author{U. K. Anandavardhanan}

\address{Department of Mathematics, Indian Institute of Technology Bombay, Mumbai - 400076, India.}
\email{anand@math.iitb.ac.in}

\subjclass{Primary 20C15; Secondary 20C20, 20C33}

\date{}

\begin{abstract}
For $q=p^m$, where $p$ is an odd prime number, we study the correlation coefficient $c(\pi;H,K)$ of an irreducible (complex) representation $\pi$ of $G={\rm GL}_2(\mathbb F_q)$ with respect to a split torus $H$ and a non-split torus $K$. We consider a family of non-split tori $K_{\alpha,u}$ indexed by $u \in \mathbb F_q$ and $\alpha \in \mathbb F_q^\times \setminus \mathbb F_q^{\times 2}$. We show that under any identification of $\mathbb C$ with $\overline{\mathbb Q}_p$, and writing $\pi = \pi_r$ where $0 \leq r \leq (q-1)/2$ depending on this identification, we have 
\[c(\pi_r;H,K_{\alpha,u}) \equiv [P_r(u/\sqrt{\alpha})]^2 \mod p, \]
where $P_r(X) \in \mathbb Z[\frac{1}{2}][X]$ is the $r$-th Legendre polynomial. As a corollary, when $m \geq 2$, we prove that there exists $u \in \mathbb F_q^\times$ such that $c(\pi;H,K_{\alpha,u}) \neq 0$ for all irreducible representations $\pi$ of $G$ admitting fixed vectors for both $H$ and $K$.
\end{abstract}

\maketitle

\section{Introduction}\label{intro}

For a group $G$ with two given compact Gelfand subgroups $H$ and $K$ and an irreducible representation $\pi$ of $G$, let $v_H$ (resp. $v_K$) denote an $H$-invariant (resp. $K$-invariant) vector of unit norm in a given $G$-invariant Hermitian inner product $\langle ~,~ \rangle_\pi$ on $\pi$. The correlation coefficient of $\pi$ with respect to $H$ and $K$ is then defined as \cite[\S 8, \S 9]{gro91}
\[c(\pi;H,K) = |\langle v_H,v_K \rangle_\pi|^2.\] 
Since $\langle~,~\rangle_\pi$ is unique up to scalar multiplication, and since $v_H$ and $v_K$ are unique up to a complex number of absolute value $1$ (in a chosen invariant inner product), it follows that $c(\pi;H,K)$ is well-defined.

In this paper, we study the correlation coefficient $c(\pi;H,K)$ of an irreducible complex representation $\pi$ of $G={\rm GL}_2(\mathbb F_q)$ with respect to a split torus $H$ and a non-split torus $K$. This was the theme in \cite{aj22} as well, where the embeddings of the two tori were chosen carefully so that the pair $(H,K)$ inside $G$ is unique up to conjugacy. Consequently, the value of the correlation coefficient $c(\pi;H,K)$ is independent of such compatible pairs $(H,K)$. In this situation, the vanishing of the correlation coefficient is closely related to the root number of $\pi$. When $\pi$ admits fixed vectors for both $H$ and $K$, its root number is $\pm 1$ and it is necessary that the root number is $1$ for $c(\pi; H,K) \neq 0$. When $q=p$, this condition is also sufficient \cite[Corollary 1.2]{aj22}. Thus, roughly half the time the correlation vanishes.  

The present work is inspired by \cite[\S 7.1.1]{bhko24} where the local newform is shown to be a test vector for a local unramified toric period for irreducible supercuspidal representations of ${\rm PGL}_2(\mathbb Q_p)$, where the novelty is that the embedding of the unramified torus in ${\rm PGL}_2(\mathbb Q_p)$ depends only on the conductor of the representation. This is done for conductor (which is necessarily even) $\geq 4$. Moreover, it is shown that the toric period is an $\ell$-adic unit for a given prime $\ell \neq p$. In the case of conductor two, the problem may be reformulated to that of simultaneous nonvanishing of the correlation coefficient $c(\pi;H,K)$ for all irreducible cuspidal representations $\pi$ of ${\rm GL}_2(\mathbb F_p)$ for a choice of $K$ with $H$ being the standard diagonal embedding of the split torus. Motivated by these considerations, it is natural to ask the following question. 
\begin{question}\label{question}
Fix $H$ to be the standard diagonal embedding of the split torus in ${\rm PGL}_2(\mathbb F_q)$. Is there an embedding $K$ of the non-split torus such that the correlation coefficient $c(\pi; H, K)$ is nonvanishing for all irreducible representations $\pi$ of ${\rm PGL}_2(\mathbb F_q)$? And such that this is a $p$-adic unit?
\end{question}
 
With this in mind, we compute the correlation coefficient of $\pi$ with respect to $H$ and a suitable conjugate of $K$ considered in \cite{aj22} by a unipotent element as in [BHKO24]. Thus, let $\alpha \in \mathbb F_q^\times \setminus \mathbb F_q^{\times 2}$ and let 
\[K= K_\alpha = \left\{ \left(\begin{array}{cc} 1 & \alpha z \\ z & 1 \end{array} \right), \left(\begin{array}{cc} 0 & \alpha \\ 1 & 0 \end{array} \right)  \mid z \in \mathbb F_q \right\}.\] Let $u \in \mathbb F_q$ and let
\begin{align*}
K_{\alpha,u} &= \left(\begin{array}{cc} 1 & u \\ 0 & 1 \end{array} \right) K_\alpha \left(\begin{array}{cc} 1 & -u \\ 0 & 1 \end{array} \right). 
\end{align*}
Unlike in \cite{aj22}, where the pair $(H,K_\alpha)$ was carefully chosen in order for $c(\pi;H,K_\alpha)$ to be independent of $\alpha$, and in this case roughly half the time the correlation was zero, now we should expect the answer to depend on both $\alpha$ and $u$. 

We fix an identification of $\mathbb C$ with $\overline{\mathbb Q}_p$ and consider the representations as defined over $\overline{\mathbb Q}_p$. Let $\mathcal O_m \subset \overline{\mathbb Z}_p$ denote the ring of integers of the unramified extension of $\mathbb Q_p$ of degree $m$. Let $\chi$ be a character of $\mathbb F_q^\times$ that reduces mod $p$ to the identity character of $\mathbb F_q^\times$. Similarly, let $\psi$ be a character of $\mathbb F_{q^2}^\times$ that reduces mod $p$ to the identity character of $\mathbb F_{q^2}^\times$. The unique quadratic character of ${\rm PGL}_2(\mathbb F_q)$ is given by $\eta = \chi^{\frac{q-1}{2}} \circ \det$. An irreducible principal series representation, up to isomorphism, of ${\rm PGL}_2(\mathbb F_q)$ is of the form ${\rm Ps}(\chi^r,\chi^{-r})$, where $1 \leq r \leq \frac{q-3}{2}$. Irreducible cuspidal representations of ${\rm PGL}_2(\mathbb F_q)$, up to isomorphism, are parametrized by $\psi^{(q-1)(r+1)}$, where $0 \leq r \leq \frac{q-3}{2}$. These $(q-2)$  representations and the twisted Steinberg representation St$\otimes \eta$ (and of course the trivial representation) are precisely the representations that admit both $H$ and $K$-invariant vectors. Moreover, for these representations, we have multiplicity one for both the invariant spaces. When $\pi = {\rm St}\otimes \eta$, we take $r = (q-1)/2$. 

\begin{theorem}\label{thm1}
Let $\pi = \pi_r$ be an irreducible representation of ${\rm PGL}_2(\mathbb F_q)$ over $\overline{\mathbb Q}_p$ admitting fixed vectors for both $H$ and $K$.  Then, we have $c(\pi_r;H,K_{\alpha,u}) \in \mathcal O_m$ and 
\[c(\pi_r;H,K_{\alpha,u}) \equiv [P_r(u/\sqrt{\alpha})]^2 \mod p, \] 
where $P_r(X) \in \mathbb Z[\frac{1}{2}][X]$ is the $r$-th Legendre polynomial. 
\end{theorem}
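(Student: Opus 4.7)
Since $K_{\alpha,u} = n_u K_\alpha n_u^{-1}$ with $n_u = \bigl(\begin{smallmatrix}1 & u\\0 & 1\end{smallmatrix}\bigr)$, a unit $K_{\alpha,u}$-invariant vector is $\pi_r(n_u) v_{K_\alpha}$ up to a scalar of absolute value one, so
\[c(\pi_r;H,K_{\alpha,u}) = |\langle v_H, \pi_r(n_u) v_{K_\alpha}\rangle|^2 = \mathrm{tr}\bigl(\pi_r(e_H e_{K_{\alpha,u}})\bigr) = \frac{1}{(q-1)(q+1)}\sum_{h\in H,\, k\in K_\alpha}\chi_{\pi_r}(h\cdot n_u k n_u^{-1}),\]
the last equality being the standard trace-of-idempotents formula. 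Since both $|H|=q-1$ and $|K_\alpha|=q+1$ are coprime to $p$ and $\chi_{\pi_r}$ takes values in $\mathcal{O}_m$, the sum lies in $\mathcal{O}_m$, which gives the integrality claim.

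For the congruence, I would pick an integral model of $\pi_r$ over $\mathcal{O}_m$ and analyse its mod-$p$ reduction. Since $\chi$ and $\psi$ reduce to the identity characters on their respective multiplicative groups, the natural candidate for the constituent of $\bar\pi_r$ carrying the $H$- and $K_\alpha$-invariants is $\mathrm{Sym}^{2r}(\overline{\mathbb F}_p^2)\otimes\det^{-r}$: a direct weight-space count shows that this module has one-dimensional $H$- and $K_\alpha$-invariant lines, while the other Jordan--H\"older constituents of $\bar\pi_r$ have no such invariants (to be verified case by case for principal series, cuspidal, and twisted Steinberg families). Granting this, the reductions of $v_H$ and $v_{K_\alpha}$ land in a single symmetric-power constituent and the mod-$p$ matrix coefficient can be computed there.

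In the model $\mathrm{Sym}^{2r}(V)\otimes\det^{-r}$ with standard basis $e_1, e_2$ of $V$, one has $v_H \propto e_1^r e_2^r$; diagonalising $K_\alpha$ over $\mathbb F_{q^2}$ via eigenvectors $e_\pm = \pm\sqrt{\alpha}\,e_1 + e_2$ gives $v_{K_\alpha} \propto (e_+ e_-)^r = (e_2^2 - \alpha e_1^2)^r$. Since $n_u$ acts by $e_1\mapsto e_1$, $e_2\mapsto u e_1 + e_2$, one obtains $\pi_r(n_u) v_{K_\alpha}\propto ((u^2-\alpha) e_1^2 + 2u e_1 e_2 + e_2^2)^r$, and extracting the coefficient of $e_1^r e_2^r$ yields
\[\sum_{i=0}^{\lfloor r/2\rfloor} \frac{r!\,(2u)^{r-2i}(u^2-\alpha)^i}{(i!)^2(r-2i)!} = (2\sqrt{\alpha})^r P_r(u/\sqrt{\alpha}),\]
via the classical identity $P_r(x) = r!\sum_{i}x^{r-2i}(x^2-1)^i/(4^i(i!)^2(r-2i)!)$. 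Using the $\mathrm{GL}_2$-invariant bilinear form on the symmetric power, a direct computation in the $e_\pm$-basis shows that the norm factors satisfy $\|v_H\|^2 \|v_{K_\alpha}\|^2 = 4^r\alpha^r$ (up to the overall scaling of the form, which cancels in $c$), so dividing the square of the coefficient by this product yields $c(\pi_r;H,K_{\alpha,u}) \equiv P_r(u/\sqrt{\alpha})^2 \pmod p$.

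The main obstacle is the middle step: rigorously identifying the composition factor of $\bar\pi_r$ supporting both invariants, uniformly across the three families. For the principal series this is transparent from the induced model via the Teichm\"uller reductions of $\chi^{\pm r}$; for cuspidal representations it needs an integral realisation (for instance via Deligne--Lusztig cohomology or Weil representations) together with a weight-space analysis of the resulting composition factors; and the twisted Steinberg case $r = (q-1)/2$ is a boundary case handled separately, using the special congruences satisfied by Legendre polynomials at that value modulo $p$.
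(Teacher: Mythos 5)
Your overall strategy is the paper's: express $c(\pi;H,K_{\alpha,u})$ as the trace of $\pi(e_He_{K_{\alpha,u}})$ (the character double sum of Lemma \ref{lemma-elementary}), deduce integrality from $p\nmid |H||K|$, reduce mod $p$, locate the unique Jordan--H\"older constituent of $\bar\pi_r$ carrying both invariant lines, and compute there; your expansion of the coefficient of $e_1^re_2^r$ in $\pi_r(n_u)v_{K_\alpha}$ is exactly the paper's computation of $s_r(u,\alpha)$ via the identity (\ref{legendre-2}). The one genuine variation is the final normalization: the paper avoids any inner product in characteristic $p$ by computing a second scalar $t_r(u,\alpha)$ (the $K$-average applied to $v_H$), identifying the trace with the product $s_rt_r$, and invoking the symmetry $P_{p-1-r}\equiv P_r$; you instead divide $s_r^2$ by a norm ratio computed from an invariant bilinear form on $\mathrm{Sym}^{2r}\otimes\det^{-r}$. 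That route can be made to work (the relevant quantity is the ratio $B(v_K,v_K)/B(v_H,v_H)=4^r\alpha^r$, not the product of norms as written, and one must check both self-pairings are $p$-adic units), but it is asserted rather than proved.

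There are two genuine gaps. First, the step you yourself flag as the main obstacle --- identifying the constituent of $\bar\pi_r$ supporting both invariants uniformly across the principal series, cuspidal, and twisted Steinberg families --- is precisely the nontrivial input of the proof; the paper imports it from Diamond's computation of the mod-$p$ Jordan--H\"older factors via \cite[Propositions 4.2 and 4.4]{aj22} (Proposition \ref{prop-future}), and without it your argument does not get off the ground for the cuspidal family in particular. Second, your proposal only treats $q=p$: for $q=p^m$ with $m\geq 2$ and $2r\geq p$, the module $\mathrm{Sym}^{2r}(\overline{\mathbb F}_p^2)\otimes\det^{-r}$ is not irreducible and is not the relevant constituent. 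The correct object is the Frobenius-twisted tensor product $\bigotimes_{i}(\mathrm{Sym}^{2r_i}\otimes\det^{-r_i})\circ\mathrm{Frob}^i$ attached to the $p$-adic digits $r=\sum r_ip^i$, in which $u$ and $\alpha$ appear raised to $p^i$-th powers in the $i$-th factor; the trace then factors as $\prod_i[P_{r_i}(u/\sqrt\alpha)]^{2p^i}$, and reassembling this into $[P_r(u/\sqrt\alpha)]^2$ requires the Ille--Schur congruence (\ref{ille-schur}), which plays the role of Lucas's theorem and is absent from your outline. You would need to add both ingredients to have a complete proof of the theorem as stated.
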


Thus, an affirmative answer to Question \ref{question} reduces to the following expectation.
\begin{quote}
There exists $c \in \mathbb F_q^\times \setminus \mathbb F_q^{\times 2}$ such that 
$(x^2-c)$ is not a factor of any of $P_r(x)$ mod $p$ for $0 \leq r \leq (q-1)/2$.
\end{quote}

We verify the above expectation when $m \geq 2$ and thus obtain the following theorem as a corollary to Theorem \ref{thm1} (see \S \ref{simul}).
\begin{theorem}\label{thm2}
Let $q = p^m$ with $m \geq 2$. There exists $u \in \mathbb F_q^\times$ such that $c(\pi;H,K_{\alpha,u}) \in \mathcal O_m^\times$ for all the irreducible representations $\pi$ of ${\rm PGL}_2(\mathbb F_q)$ over $\overline{\mathbb Q}_p$ admitting fixed vectors for both $H$ and $K$. 
\end{theorem}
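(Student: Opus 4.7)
The plan is to combine Theorem~\ref{thm1} with a Lucas-type congruence for Legendre polynomials modulo $p$, reducing Theorem~\ref{thm2} to an elementary degree count that succeeds precisely when $m \geq 2$. By Theorem~\ref{thm1} it suffices to find $u \in \mathbb F_q^\times$ such that $P_r(u/\sqrt\alpha) \not\equiv 0 \pmod p$ for all $0 \le r \le (q-1)/2$. Fixing $\delta = \sqrt\alpha \in \mathbb F_{q^2}$ (so that $\delta^q = -\delta$) and substituting $\beta = u/\delta$, the assignment $u \mapsto \beta$ is a bijection from $\mathbb F_q^\times$ onto $S \setminus\{0\}$, where $S = \{\beta \in \mathbb F_{q^2} : \beta^q = -\beta\}$ has exactly $q$ elements. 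Thus I need $\beta \in S \setminus\{0\}$ that is not a root of any $P_r \bmod p$ in $\overline{\mathbb F_p}$ for $1 \le r \le (q-1)/2$; the desired $u = \beta\delta$ then lies in $\mathbb F_q^\times$ since $u^q = (-\beta)(-\delta) = u$.

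The key ingredient I would establish is the following Lucas-type congruence: if $r = \sum_{i=0}^{m-1} r_i p^i$ with $0 \le r_i < p$, then
\[ P_r(x) \equiv \prod_{i=0}^{m-1} P_{r_i}(x^{p^i}) \pmod p. \]
This follows from the generating identity $\sum_r P_r(x) t^r = (1-2xt+t^2)^{-1/2}$ by exploiting the Frobenius congruence $(1-2xt+t^2)^p \equiv 1-2x^p t^p + t^{2p} \pmod p$ to peel off one base-$p$ digit at a time, iterated $m$ times. Since the root set of $P_s \in \mathbb F_p[x]$ is stable under $x \mapsto x^p$, one has $P_s(\beta^{p^i}) = 0 \iff P_s(\beta) = 0$. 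Together with the congruence, $P_r(\beta) = 0$ for some $r \le (q-1)/2$ holds iff $P_s(\beta) = 0$ for some digit $s$ of some such $r$.

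This is where $m \ge 2$ enters: every $s \in \{1, \dots, p-1\}$ appears as the zero-th base-$p$ digit of the valid integer $r = s$, since $s \le p-1 \le (q-1)/2$ when $m \ge 2$. The condition on $\beta$ therefore collapses to: $\beta$ is not a root of $P_s$ for any $1 \le s \le p-1$. The polynomial $\prod_{s=1}^{p-1} P_s$ has degree $p(p-1)/2$, so at most $p(p-1)/2$ elements of $\overline{\mathbb F_p}$ are bad. Since $|S \setminus\{0\}| = q-1 \ge p^2 - 1$, the inequality
\[ q - 1 - \tfrac{p(p-1)}{2} \ge p^2 - 1 - \tfrac{p(p-1)}{2} = \tfrac{(p-1)(p+2)}{2} > 0 \]
produces at least one good $\beta$, completing the proof.

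The main obstacle is proving the Lucas-type congruence cleanly; although it is classical, I would want to spell out the generating-function argument carefully. Everything downstream is Frobenius bookkeeping plus a crude degree count with comfortable slack, and the failure of the counting for $m = 1$ (where only $s \le (p-1)/2$ can appear as the single digit $r_0$) is consistent with the theorem's hypothesis.
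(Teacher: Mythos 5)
Your argument is correct and follows essentially the same route as the paper: reduce via Theorem \ref{thm1} to the simultaneous nonvanishing of $P_r(u/\sqrt{\alpha})$, invoke the Ille--Schur congruence (your Lucas-type congruence $P_r(x)\equiv\prod_i P_{r_i}(x^{p^i})$ is identical to the paper's $\prod_i P_{r_i}(x)^{p^i}$ since the coefficients lie in $\mathbb F_p$) to reduce to single-digit Legendre polynomials, and finish with a root count against the $q-1$ admissible values, which succeeds exactly when $m\geq 2$. The only cosmetic differences are that the paper additionally uses the symmetry $P_{p-1-s}\equiv P_s$ and the parity of the $P_s$ to shrink the degree bound to $\lfloor(p^2-1)/16\rfloor$, and cites the Ille--Schur congruence rather than reproving it; neither affects the conclusion.
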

 
For the proof of Theorem \ref{thm1}, we closely follow \cite[\S 5]{aj22} (see also \cite[Theorem 7.2]{vat23}). The key non-trivial ingredient in {\cite{aj22} is the mod $p$ reduction of irreducible representations of ${\rm GL}_2(\mathbb F_q)$ defined over $\overline{\mathbb Q}_p$ \cite{dia07} which is the approach as well in \cite[Theorem 7.2]{vat23}. 

Legendre polynomials classically appear in representation theory via the following result due to Vilenkin and Dieudonn\'e \cite[Proposition 1.1]{bry92}: in the representation 
${\rm Sym}^{2r}\mathbb R^2 \otimes \det{^{-r}}$ of ${\rm PGL}_2(\mathbb R)$, the matrix coefficient corresponding to 
$g = [a,  b; c, d]$ and $v = x^r y^r$ is given by $P_r(ad+bc/ad-bc)$. Clubbing this result with Theorem \ref{thm1}, we can derive new identities for Legendre polynomials (see Equations  (\ref{vd}) and  (\ref{fe})).  

We end the introduction by noting that the problem of factorising Legendre polynomials modulo $p$ is intimately related to number theory via Hasse invariants of elliptic curves and in turn to class numbers of imaginary quadratic number fields. We refer to \cite{bm04} and the references therein for further details. In \S \ref{legendre-p}, we discuss this theme in some detail and derive two identities in Lemma \ref{lem-legendre}.   

\section{Recap of the earlier work}\label{recap}

In this section we summarize the strategy employed in \cite{aj22}. Let $q=p^m$ and let $r \in \mathbb N$ be such that $0 \leq r \leq q$. Write
\[r = r_0+r_1p+\dots+r_{m-1}p^{m-1}\]
with $0 \leq r_i \leq p-1$ for $0 \leq i \leq m-1$. Let ${\rm Sym}^{r_i} \overline{\mathbb F}_p^2$ denote the $r_i$-th symmetric power representation of the standard representation of ${\rm GL}_2(\mathbb F_q)$. Then 
 \[\rho_r = {\rm Sym}^{r_0} \overline{\mathbb F}_p^2 \otimes {\rm Sym}^{r_1} \overline{\mathbb F}_p^2 \circ {\rm Frob} \otimes \dots \otimes  {\rm Sym}^{r_{m-1}} \overline{\mathbb F}_p^2 \circ {\rm Frob}^{m-1}\] 
is an irreducible $\overline{\mathbb F}_p$-representation of ${\rm GL}_2(\mathbb F_q)$, where Frob is the Frobenius morphism. Any irreducible mod $p$ representation of ${\rm GL}_2(\mathbb F_q)$ is of this form.  

The Jordan-H\"older constituents of the reduction mod $p$ of irreducible representations (over $\overline{\mathbb Q}_p$) of ${\rm GL}_2(\mathbb F_q)$ are computed in \cite[Proposition 1.1 ~\&~ Proposition 1.3]{dia07}. As mentioned in  the introduction, the $\overline{\mathbb  Q}_p$-representations of interest to us are of the form ${\rm Ps}(\chi^r,\chi^{-r})$ or $\pi(\psi^{(q-1)(r+1)})$ or ${\rm St}\otimes \eta$. The Steinberg representation remains irreducible on reduction mod  $p$, whereas in the principal series and cuspidal cases, generically there are $2^m$ Jordan-H\"older factors in their mod $p$ reduction and these are indexed by subsets of $Z/m\mathbb Z$. Among these factors there is precisely one which admits fixed vectors for $H$ and $K$ and this was isolated in \cite[Proposition 4.2 ~\&~ Proposition 4.4]{aj22} which we now state.

\begin{proposition}\label{prop-future}
Let $\pi_r = {\rm Ps}(\chi^r,\chi^{-r})$ or $\pi(\psi^{(q-1)(r+1)})$. Write $r=r_0+r_1p+\dots+r_{m-1}p^{m-1}$ with $0 \leq r_i \leq p-1$ for $0 \leq i \leq m-1$. Let $a_0+a_1p+\dots+a_{m-1}p^{m-1}$ be the $p$-adic expansion of $2r$ (resp. $2r+1$) when $\pi_r = {\rm Ps}(\chi^r,\chi^{-r})$ (resp. $\pi_r = \pi(\psi^{(q-1)(r+1)})$). Let $J \subset \mathbb Z/m\mathbb Z$ be defined as
\[J = \left\{i \mid a_i = 2r_i \mbox{~or~} 2r_i+1 \right\}.\]
Then the component in the mod $p$ reduction of $\pi$ corresponding to $J$ is
\[ \left( \bigotimes_{i \in J} \left( {\rm Sym}^{2r_i} \overline{\mathbb F}_p^2 \otimes \det{^{-r_i}} \overline{\mathbb F}_p^2 \right) \circ {\rm Frob}^i \right) \bigotimes \left(  \bigotimes_{i \notin J} \left( {\rm Sym}^{2p-2-2r_i} \overline{\mathbb F}_p^2 \otimes \det{^{r_i}} \overline{\mathbb F}_p^2 \right) \circ {\rm Frob}^i \right).\]
\end{proposition}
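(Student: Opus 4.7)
The plan is to follow the strategy of \cite{aj22}. The starting point is Diamond's explicit description \cite{dia07} of the Jordan--H\"older decomposition (on the semisimplified level) of the mod $p$ reduction of $\pi_r$. For both the principal series ${\rm Ps}(\chi^r,\chi^{-r})$ and the cuspidal $\pi(\psi^{(q-1)(r+1)})$, the (at most) $2^m$ constituents are indexed by subsets $J' \subseteq \mathbb{Z}/m\mathbb{Z}$, and at each place $i$ the two candidate tensor factors are ${\rm Sym}^{2r_i} \otimes \det^{-r_i}$ (the ``low'' choice) and ${\rm Sym}^{2p-2-2r_i} \otimes \det^{r_i}$ (the ``high'' choice), each composed with ${\rm Frob}^i$. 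The task is to identify the specific $J'$ whose constituent carries the (unique) $H$- and $K$-fixed lines.

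I would identify $J'$ via an explicit $H$-invariance computation. Since $H$ is $\mathbb{F}_p$-rational, its action on each tensor factor is diagonal in the monomial basis $\{X^j Y^{b_i-j}\}$, and the existence of a joint $H$-eigenvector of weight zero in the full tensor product reduces to a pair of congruences modulo $q-1$ in the exponents $(j_i)_i$. These can be analysed place by place, each $i$ contributing a choice between the ``low'' and ``high'' candidate; the compatibility forced by the trivial central character of $\pi_r$ singles out a unique assignment, hence a unique $J'$. The combinatorial content of this $J'$ can then be read off from the base-$p$ carry pattern in the addition $r+r$ (respectively $r+r+1$ in the cuspidal case), and matches the set $J = \{i : a_i = 2r_i \text{ or } 2r_i + 1\}$ via the observation that ``$a_i$ equals $2r_i$ or $2r_i + 1$'' encodes precisely the no-carry-out-of-position-$i$ condition.

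For $K$-invariance on the identified constituent, I would appeal to multiplicity one: the pairs $(G,H)$ and $(G,K)$ are Gelfand on $\pi_r$, so both fixed-vector spaces are at most one-dimensional. Each such line lifts, via an integral structure on $\pi_r$ over $\mathcal{O}_m$, into a single Jordan--H\"older constituent of the reduction. Since we have already pinned down the unique constituent with $H$-invariants, the $K$-invariant line must live in the same constituent, yielding the claimed description. (Alternatively, one can exhibit a $K$-invariant on each tensor factor using the embedding $K \hookrightarrow \mathbb F_{q^2}^\times$ and a suitable norm polynomial, and then tensor these together.)

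The \emph{main obstacle} lies in the combinatorial bookkeeping in the middle step: one must verify that the $H$-invariance congruence matches the digit condition defining $J$ uniformly across all configurations of $r_i$, especially in the regimes where either the ``low'' or the ``high'' choice has symmetric-power exponent exceeding $p-1$ (namely $r_i > (p-1)/2$ or $r_i < (p-1)/2$, respectively). In such cases the literal expression ${\rm Sym}^{b_i}$ is not $\mathbb F_q$-irreducible in the naive sense and should be interpreted as the irreducible ${\rm GL}_2(\overline{\mathbb F}_p)$-representation of the indicated highest weight; one must then check that the identification of the Jordan--H\"older constituent with this tensor product holds at the level of semisimplifications. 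The cuspidal case introduces an additional shift (from $2r$ to $2r+1$) that must be threaded through the carry analysis uniformly.
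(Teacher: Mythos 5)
The paper offers no proof of this statement: it is quoted as a recollection of \cite[Propositions 4.2 and 4.4]{aj22}, whose content in turn rests on Diamond's computation \cite[Propositions 1.1 and 1.3]{dia07} of the Jordan--H\"older factors of the reduction. Your plan --- Diamond's parametrization of the constituents by subsets of $\mathbb Z/m\mathbb Z$, followed by locating the distinguished constituent through an invariant-vector analysis --- is the same route in spirit, but as written it contains one genuine gap and defers the step that carries essentially all of the content.

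The gap is in the $K$-invariance step. Because $p \nmid |H|$ and $p \nmid |K|$, taking invariants is exact on lattices, so multiplicity one in characteristic zero tells you that exactly one constituent of the semisimplified reduction carries an $H$-fixed line and exactly one carries a $K$-fixed line; but nothing in that argument forces these two constituents to coincide. Their coincidence is precisely the assertion at stake, so ``the $K$-invariant line must live in the same constituent'' is circular. Your parenthetical alternative is the correct argument, not a fallback: one exhibits the explicit $K_\alpha$-fixed vector $\bigotimes_i (\alpha^{p^i} x_i^2 - y_i^2)^{r_i}$ (a power of the norm form) inside the $J$-component, alongside the explicit $H$-fixed vector $\bigotimes_i x_i^{r_i} y_i^{r_i}$, and then uniqueness of the distinguished constituent follows from exactness plus multiplicity one. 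Separately, the ``main obstacle'' you flag --- symmetric-power exponents possibly exceeding $p-1$ --- does not arise: $i \in J$ is exactly the no-carry-out condition in the base-$p$ doubling of $r$, which forces $2r_i \le p-1$ for $i \in J$ and $2r_i + c_i \ge p$ (hence $2p-2-2r_i \le p-1$) for $i \notin J$, so every displayed tensor factor is already an irreducible in the standard range. What genuinely remains, and what your sketch defers, is the carry bookkeeping identifying the displayed tensor product as the constituent that Diamond's recipe attaches to this $J$ (equivalently, matching the weight-zero congruence modulo $q-1$ against the digit condition, uniformly in the principal series and cuspidal cases); that verification is the proof, and it is the part carried out in \cite{aj22}.
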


The analysis carried out in \cite{aj22} has an inherent symmetry between $r$ and $p-1-r$ and therefore it suffices to 
look at representations of the form
\[ \rho_r = \bigotimes_{i=0}^{m-1} \left( {\rm Sym}^{2r_i} \overline{\mathbb F}_p^2 \otimes \det{^{-r_i}} \overline{\mathbb F}_p^2 \right) \circ {\rm Frob}^i. \] Suppose $q=p$ and let 
\[\rho = {\rm Sym}^{2r} \overline{\mathbb F}_p^2 \otimes \det{^{-r}} \overline{\mathbb F}_p^2,\]
where $0 \leq r \leq \frac{p-1}{2}$. The action of $\rho$ on the basis $\{x^{2r-i}y^i \mid i = 0, \dots, 2r \}$ is given by
\[\rho \left( \left(\begin{array}{cc} a & b \\ c & d \end{array} \right)\right)x^{2r-i}y^i = (ax+cy)^{2r-i}(bx+dy)^i (ad-bc)^{-r}.\]
We recall the notations in \cite{aj22}. Let
\[H = \left\{\left(\begin{array}{cc} a & 0 \\ 0 & 1 \end{array} \right) \mid a \in \mathbb F_p^\times\right\}.\] Let $\alpha \in \mathbb F_p^\times \setminus \mathbb F_p^{\times 2}$ and define \[K = K_\alpha = \left\{ \left(\begin{array}{cc} 1 & \alpha z \\ z & 1 \end{array} \right), \left(\begin{array}{cc} 0 & \alpha \\ 1 & 0 \end{array} \right)  \mid z \in \mathbb F_p  \right\}.\] 
The suppression of the subscript $\alpha$ in $K_\alpha$ is eventually justified in the context of Proposition \ref{prop-main}. 

The space of fixed vectors in $\rho$ is one dimensional for both $H$ and $K$, and explicitly we can take (see \cite[\S 5.1]{aj22})
\begin{equation*}\label{eqn-vh}
v_H = x^ry^r,
\end{equation*}
and
\begin{equation*}\label{eqn-vk}
v_K = (\alpha x^2-y^2)^r.
\end{equation*}
Letting 
\[X = \frac{1}{|H|}\sum_{h \in H} \rho(h), Y = \frac{1}{|K|}\sum_{k \in K}\rho(k),\]
and for $s$ and $t$ with
\[Xv_K = s v_H \mbox{~and~} Y v_H = t v_K,\]
it is checked in \cite[\S 5.1]{aj22} that 
\begin{equation}\label{eq1}
s =
\begin{cases}
(-1)^{r/2} {r \choose r/2}\alpha^{r/2} &\text{if $r$ is even,} \\
0 &\text{if $r$ is odd,}
\end{cases}
\end{equation}
and
\begin{equation}\label{eq2}
t =
\begin{cases}
-(-1)^{(p-1-r)/2} {p-1-r \choose (p-1-r)/2}\alpha^{(p-1-r)/2} &\text{if $r$ is even,} \\
0 &\text{if $r$ is odd.}
\end{cases}
\end{equation}

The key strategy in \cite{aj22} is to interpret the correlation coefficient in terms of a double sum of the character of the representation \cite[Lemma 2.1]{aj22}:
\begin{lemma}\label{lemma-elementary}
 For a representation $\pi$ of $G$ with one dimensional space of invariant vectors for $H$ and $K$, we have the identity:
 \[|\langle v_H, v_K \rangle_\pi|^2 = \frac{1}{|H|}\frac{1}{|K|}  \sum_{h \in H} \sum_{k \in K} \chi_\pi(hk),\]
 where $\chi_\pi$ denotes the character of $\pi$.
 \end{lemma}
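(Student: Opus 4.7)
The plan is to prove the identity by interpreting both sides as the trace of a product of two rank-one orthogonal projections.

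First I would introduce the averaging operators
\[
P_H = \frac{1}{|H|}\sum_{h \in H}\pi(h), \qquad P_K = \frac{1}{|K|}\sum_{k \in K}\pi(k),
\]
and observe that each is idempotent with image equal to the subspace of $H$-invariant (resp.\ $K$-invariant) vectors. Since the inner product $\langle\,,\,\rangle_\pi$ is $G$-invariant and $H, K$ are finite, $\pi(h)^* = \pi(h^{-1})$, so averaging over $H$ (resp.\ $K$) yields a self-adjoint operator; hence $P_H$ and $P_K$ are \emph{orthogonal} projections.

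By the hypothesis that the spaces of $H$- and $K$-fixed vectors are one-dimensional, $P_H$ and $P_K$ are rank-one orthogonal projections onto $\mathbb C v_H$ and $\mathbb C v_K$ respectively, where $v_H$ and $v_K$ have unit norm. For any orthonormal basis extending $\{v_H\}$, the operator $P_H$ has matrix $E_{11}$, and a direct computation gives
\[
\operatorname{tr}(P_H P_K) = \langle P_K v_H, v_H\rangle_\pi = \langle \langle v_H, v_K\rangle_\pi\, v_K,\, v_H\rangle_\pi = |\langle v_H, v_K\rangle_\pi|^2.
\]
On the other hand, expanding the definitions and using linearity of the trace,
\[
\operatorname{tr}(P_H P_K) = \frac{1}{|H|\,|K|}\sum_{h \in H}\sum_{k \in K}\operatorname{tr}\bigl(\pi(h)\pi(k)\bigr) = \frac{1}{|H|\,|K|}\sum_{h \in H}\sum_{k \in K}\chi_\pi(hk).
\]
Equating the two expressions yields the claimed identity.

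There is no serious obstacle; the only point that requires a little care is the self-adjointness of $P_H$ and $P_K$, which uses that the chosen inner product on $\pi$ is $G$-invariant. Once this is in place, the identity is a one-line computation using that rank-one orthogonal projections $P_u$, $P_w$ onto unit vectors satisfy $\operatorname{tr}(P_u P_w) = |\langle u, w\rangle|^2$.
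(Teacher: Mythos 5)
Your proof is correct and is the standard argument; the paper itself does not reprove this lemma but cites \cite[Lemma 2.1]{aj22}, where the identity is established in essentially the same way, by writing both sides as $\operatorname{tr}(P_H P_K)$ for the two self-adjoint averaging idempotents and using that they are rank-one orthogonal projections onto $\mathbb C v_H$ and $\mathbb C v_K$. No gaps; the one point needing care (self-adjointness via $G$-invariance of the inner product) is exactly the one you flag.
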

This observation together with its analogue for Brauer characters \cite[Lemma 5.1]{aj22} can be used to conclude that $c(\pi;H,K) \mod p $ is just $st$ \cite[Proposition 5.2]{aj22}. Thus, it is proved  that (see \cite[Proposition 5.3]{aj22}):
\begin{proposition}
Let $\pi$ be an irreducible representation of ${\rm PGL}_2(\mathbb F_p)$ over $\overline{\mathbb Q}_p$ admitting fixed vectors for both $H$ and $K$.  Thus, $\pi={\rm Ps}(\chi^r,\chi^{-r})$ or $\pi=\pi(\psi^{(p-1)(r+1)})$ or $\pi = {\rm St}\otimes \eta$ in which case we take $r = (p-1)/2$. Then the correlation coefficient $c(\pi;H,K)$ belongs to ${\mathbb Z}_p$ and moreover
\[c(\pi;H,K) \mod p = \begin{cases}
(-1)^{\frac{p-1}{2}} {r \choose \frac{r}{2}} {p-1-r \choose \frac{p-1-r}{2}} &\text{if $r$ is even,} \\
0 &\text{if $r$ is odd.}
\end{cases}
\]
\end{proposition}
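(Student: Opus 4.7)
The plan is to rewrite $c(\pi;H,K)$ as the trace of a single operator on $\pi$, pass to the semisimplification of the mod $p$ reduction, and observe that exactly one Jordan--H\"older constituent can contribute.

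First, introducing the averaging idempotents $X = \frac{1}{|H|}\sum_{h \in H}\pi(h)$ and $Y = \frac{1}{|K|}\sum_{k \in K}\pi(k)$, Lemma \ref{lemma-elementary} reads
\[c(\pi;H,K) \;=\; \frac{1}{|H||K|}\sum_{h,k}\chi_\pi(hk) \;=\; \operatorname{tr}\bigl(XY \mid \pi\bigr).\]
Since $|H| = p-1$ and $|K| = p+1$ are units in $\mathbb Z_p$, and character values are algebraic integers, the right hand side is a $p$-integral algebraic number; that it actually lies in $\mathbb Z_p$ (and not merely in a larger unramified extension) can be extracted from a Galois-invariance argument applied to the double character sum.

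Next, I would choose a $G$-stable $\overline{\mathbb Z}_p$-lattice in $\pi$ and reduce modulo $p$. Both $X$ and $Y$ have coefficients that are $p$-adic units, so they descend to projectors onto the $H$- and $K$-invariants of the semisimplified reduction $\bar\pi^{\mathrm{ss}}$, and since trace is additive on composition series
\[c(\pi;H,K) \bmod p \;=\; \sum_{\sigma} \operatorname{tr}\bigl(XY \mid \sigma\bigr),\]
the sum running over Jordan--H\"older constituents $\sigma$. A summand is zero unless both $\sigma^H$ and $\sigma^K$ are nonzero, since $X$ has image $\sigma^H$ and $Y$ has image $\sigma^K$. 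By Proposition \ref{prop-future} in the case $m=1$, exactly one constituent meets both conditions, namely $\rho_r = \operatorname{Sym}^{2r}\overline{\mathbb F}_p^2 \otimes \det{^{-r}}$.

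It remains to compute $\operatorname{tr}(XY \mid \rho_r)$. On $\rho_r$ the $H$- and $K$-invariant lines are one-dimensional, spanned by $v_H = x^r y^r$ and $v_K = (\alpha x^2 - y^2)^r$, and by construction $Xv_K = s\,v_H$, $Yv_H = t\,v_K$. Since the image of $XY$ is contained in the one-dimensional line $\rho_r^H$ and $XY(v_H) = st\,v_H$, we obtain $\operatorname{tr}(XY \mid \rho_r) = st$. Plugging \eqref{eq1} and \eqref{eq2} into this and using $\alpha^{(p-1)/2} = -1$ (Euler's criterion for the non-square $\alpha$), the product $st$ collapses to the asserted expression when $r$ is even, and vanishes when $r$ is odd. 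The main obstacle is the reduction step: one must justify that the characteristic-zero identity of Lemma \ref{lemma-elementary} survives mod $p$ as an identity for Brauer characters. This is precisely the Brauer-character analogue \cite[Lemma 5.1]{aj22} of Lemma \ref{lemma-elementary}, and it is the only non-elementary input in the argument; everything else is combinatorics and linear algebra.
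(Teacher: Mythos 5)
Your proposal is correct and follows essentially the same route as the paper (which recalls the argument of [AJ22]): the character double sum of Lemma \ref{lemma-elementary} interpreted as $\operatorname{tr}(XY)$, its Brauer-character analogue to pass to the mod $p$ semisimplification, the identification via Proposition \ref{prop-future} of the unique Jordan--H\"older constituent with both $H$- and $K$-fixed vectors, and the evaluation $\operatorname{tr}(XY\mid\rho_r)=st$ using \eqref{eq1}, \eqref{eq2} and $\alpha^{(p-1)/2}=-1$. The only detail you gloss over is the Steinberg case, where the reduction is already irreducible, so the argument is even simpler.
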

\noindent
By appealing to Lucas’s theorem on binomial coefficients modulo $p$, the above arguments can be extended and we get (see \cite[Theorem 5.4]{aj22}):
\begin{proposition}\label{prop-main}
Let $\pi$ be an irreducible representation of ${\rm PGL}_2(\mathbb F_q)$ over $\overline{\mathbb Q}_p$ admitting fixed vectors for both $H$ and $K$.  Thus, $\pi={\rm Ps}(\chi^r,\chi^{-r})$ or $\pi=\pi(\psi^{(q-1)(r+1)})$ or $\pi = {\rm St}\otimes \eta$ in which case we take $r = (q-1)/2$. Write
$r= r_0+r_1p+\dots+r_{m-1}p^{m-1}$. Then the correlation coefficient $c(\pi;H,K)$ belongs to ${\mathbb Z}_p$ and moreover
\[c(\pi;H,K) \mod p =  \begin{cases}
(-1)^{\frac{q-1}{2}} {r \choose \frac{r}{2}} {q-1-r \choose \frac{q-1-r}{2}} &\text{if each $r_i$ is even,} \\
0 &\text{otherwise.}
\end{cases}
\]
\end{proposition}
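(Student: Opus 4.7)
My plan is to follow the mod $p$ strategy of \cite[\S 5]{aj22} essentially verbatim, but applied to the tensor-product Jordan--H\"older component
\[\rho_r = \bigotimes_{i=0}^{m-1}\bigl({\rm Sym}^{2r_i}\overline{\mathbb F}_p^{\,2}\otimes \det{^{-r_i}}\bigr)\circ {\rm Frob}^i\]
supplied by Proposition \ref{prop-future} (invoking the $r_i\leftrightarrow p-1-r_i$ symmetry of Section \ref{recap} to assume $J=\mathbb Z/m\mathbb Z$), and then appealing to Lucas's theorem at the end to re-package a product of binomials into a single one. On this tensor product the unique (up to scalar) $H$- and $K$-fixed vectors are visibly
\[v_H = \bigotimes_i x^{r_i}y^{r_i},\qquad v_K = \bigotimes_i\bigl(\alpha^{p^i}x^2 - y^2\bigr)^{r_i},\]
the exponent $p^i$ on $\alpha$ arising because ${\rm Frob}^i(K_\alpha) = K_{\alpha^{p^i}}$. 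By \cite[Lemma~5.1, Proposition~5.2]{aj22}, the mod $p$ reduction of $c(\pi_r;H,K)$ equals $st$, where $Xv_K = sv_H$ and $Yv_H = tv_K$.

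The scalar $s$ is computed directly in the $H$-eigenbasis $\{x^ky^{2r_i-k}\}_{k=0}^{2r_i}$ for each slot: since $\mathrm{diag}(a,1)$ acts by $a^{(k-r_i)p^i}$ on slot $i$, averaging $Xv_K$ over $a\in\mathbb F_q^\times$ selects tuples $(j_0,\ldots,j_{m-1})$ with $\sum_i(2j_i-r_i)p^i\equiv 0\pmod{q-1}$. The uniqueness of signed base-$p$ expansions with digits in $[-(p-1),p-1]$, together with the size estimate $|2j_i-r_i|\le r_i\le p-1$ (the cases $\sum\equiv\pm(q-1)$ being ruled out by $r\le (q-1)/2$), forces $2j_i = r_i$ for every $i$, so each $r_i$ must be even, yielding
\[s\equiv(-1)^{\sum_i r_i/2}\prod_i\binom{r_i}{r_i/2}\,\alpha^{r/2}\pmod p.\]
For $t$ I base change to $\mathbb F_{q^2}$ and diagonalise $K$ on each slot using $e_\pm^{(i)} = \sqrt{\alpha^{p^i}}\,x\pm y$; in this basis $v_K = \bigotimes_i(e_+^{(i)}e_-^{(i)})^{r_i}$, and parametrising $k\in K$ by $\zeta\in\mu_{q+1}$, the action of $k_\zeta$ on $(e_+^{(i)})^k(e_-^{(i)})^{2r_i-k}$ is by $\zeta^{p^i(k-r_i)}$. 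The analogous $(q+1)$-divisibility estimate forces $k=r_i$ in every slot, and expanding $v_H = \prod_i(4\sqrt{\alpha^{p^i}})^{-r_i}((e_+^{(i)})^2-(e_-^{(i)})^2)^{r_i}$ yields
\[t\equiv\frac{(-1)^{\sum_i r_i/2}\prod_i\binom{r_i}{r_i/2}}{4^{\sum_i r_i}\,\alpha^{r/2}}\pmod p,\]
again nonzero iff every $r_i$ is even.

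Multiplying, the $\alpha^{r/2}$ factors cancel and $st\equiv\prod_i\binom{r_i}{r_i/2}^2/4^{\sum_i r_i}\pmod p$. Applying the classical mod-$p$ identity
\[\binom{n}{n/2}/4^n\equiv(-1)^{(p-1)/2}\binom{p-1-n}{(p-1-n)/2}\pmod p\]
(valid for even $0\le n\le p-1$, and essentially the reconciliation of \cite[Eqs.\ (1)\ \&\ (2)]{aj22} with the direct $\mathbb F_{q^2}$-calculation above) in each slot converts this into $(-1)^{m(p-1)/2}\prod_i\binom{r_i}{r_i/2}\binom{p-1-r_i}{(p-1-r_i)/2}$. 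Lucas's theorem now identifies $\prod_i\binom{r_i}{r_i/2}$ with $\binom{r}{r/2}$ and $\prod_i\binom{p-1-r_i}{(p-1-r_i)/2}$ with $\binom{q-1-r}{(q-1-r)/2}$ modulo $p$ (both base-$p$ expansions have the required digits precisely when each $r_i$ is even), and a short parity check $m(p-1)/2\equiv(q-1)/2\pmod 2$ (using $1+p+\cdots+p^{m-1}\equiv m\pmod 2$) produces the stated sign $(-1)^{(q-1)/2}$. The main obstacle, I expect, is the $t$-computation: because the non-split torus $K$ is not diagonal over $\mathbb F_q$, one must verify that the diagonalisation $e_\pm^{(i)} = \sqrt{\alpha^{p^i}}\,x\pm y$ is compatible with the Frobenius twist on each slot and that the $(q+1)$-divisibility estimate isolates the single tuple $k_i = r_i$; once this is in place, the remaining bookkeeping---Lucas plus the classical $\binom{n}{n/2}/4^n$ identity---is essentially automatic.
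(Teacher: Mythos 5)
Your proposal is correct and follows essentially the same route the paper recalls from \cite{aj22}: reduce to the distinguished Jordan--H\"older factor $\rho_r$ via Proposition \ref{prop-future} and the $r\leftrightarrow p-1-r$ symmetry, compute the slot-wise scalars $s_i,t_i$ (the paper's Equations (\ref{eq1})--(\ref{eq2})), identify $c(\pi;H,K)\bmod p$ with $st$ via the Brauer-character version of Lemma \ref{lemma-elementary}, and reassemble the products of binomials into single ones by Lucas's theorem. The only cosmetic difference is your computation of $t$ by diagonalising $K$ over $\mathbb F_{q^2}$, which yields $\binom{r_i}{r_i/2}/4^{r_i}\alpha^{-p^ir_i/2}$ and then requires the congruence $\binom{2k}{k}\equiv(-4)^k\binom{(p-1)/2}{k}\pmod p$ to recover the form $\binom{p-1-r_i}{(p-1-r_i)/2}$, whereas the paper sums $(z^{-1}-\alpha z)^{p-1-r}$ over $\mathbb F_q^\times$ and reads off that binomial directly; both give the same answer, as your sign and $\alpha^{(p-1)/2}=-1$ bookkeeping confirms.
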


\section{Legendre Polynomials}\label{s-legendre}

As is well-known, these polynomials can be defined in various ways. For instance, via a generating function  
\[\frac{1}{\sqrt{1-2xt+t^2}} = \sum_{r=0}^\infty P_r(x)t^r,\]  
or via a recursion formula (with $P_0(x) = 1, P_1(x) =x$)   
\[(r+1)P_{r+1}(x) = (2r+1)xP_r(x)-rP_{r-1}(x),\] 
or as a solution of the differential equation   
\[(1-x^2)y^{\prime\prime} - 2xy^\prime+r(r+1)y=0,\] or by applying the Gram-Schmidt process to the standard basis for $\mathbb R[x]$ with respect to $\langle f, g \rangle = \int_{-1}^1 f(x)g(x)dx$.
There are many explicit expressions for these polynomials out of which we write down two, which will be of use to us in this paper.   
Let \[S_k = 
\begin{cases}
\{0, 2, \dots, k\} &\text{if $k$ is even,} \\
\{1,3, \dots, k\} &\text{if $k$ is odd.}
\end{cases}
\]  
We have
\begin{equation}\label{legendre-1}
P_r(x) = \frac{1}{2^r}\displaystyle{\sum_{i \in S_r}} (-1)^{\frac{r-i}{2}} {r \choose (r-i)/2}{r+i \choose i} x^i,
\end{equation}
and 
\begin{equation}\label{legendre-2}
P_r(x) = \frac{1}{2^r}
\displaystyle{\sum_{i \in S_r}} (-1)^{\frac{r-i}{2}} {r \choose i}{r-i \choose (r-i)/2}(1-x^2)^{(r-i)/2} (2x)^i. 
\end{equation}
They satisfy the following two congruences (see \cite[p. 81]{bm04}, \cite[Corollary 1.3]{bry92}). The Ille-Schur congruence states that if $r=\sum_{i=0}^{m-1} r_ip^i$, then   
\begin{equation}\label{ille-schur}
P_r(x) \equiv \prod_{i=0}^{m-1} P_{r_i}(x)^{p^i} \mod p.
\end{equation}
There is also the symmetry around $(p-1)/2$ given by   
\begin{equation}\label{symmetry}
P_{p-1-r}(x) \equiv P_r(x) \mod p.
\end{equation}
 
\section{Proof of Theorem \ref{thm1}}

Now we prove Theorem \ref{thm1} following the strategy in \cite{aj22} recalled in \S \ref{recap}. We first handle the case of one tensor factor, that is to say $r_i = 0$ for $i \geq 1$ in
\[ \rho_r = \bigotimes_{i=0}^{m-1} \left( {\rm Sym}^{2r_i} \overline{\mathbb F}_p^2 \otimes \det{^{-r_i}} \overline{\mathbb F}_p^2 \right) \circ {\rm Frob}^i\] 
 and then take up the general case. In the general case, the Ille-Schur congruence for Legendre polynomials (cf.  (\ref{ille-schur})) will play the role of Lucas's theorem in \cite[\S 5.2]{aj22} (see Section \ref{recap}). It is convenient to start with the case $q=p$ for clarity and simplicity. 

\subsection{The case $q=p$}\label{sub1}

Let $u \in \mathbb F_p$ and let
\begin{align*}
K_{\alpha,u} &= \left(\begin{array}{cc} 1 & u \\ 0 & 1 \end{array} \right) K_\alpha \left(\begin{array}{cc} 1 & -u \\ 0 & 1 \end{array} \right) \\
&=
\left\{ \left(\begin{array}{cc} 1+uz & \alpha z - u^2 z \\ z & 1-uz \end{array} \right), \left(\begin{array}{cc} u & \alpha - u^2 \\ 1 & -u \end{array} \right)  \mid z \in \mathbb F_p  \right\}.
\end{align*}
Following the approach of \cite[\S 5.1]{aj22}, we compute $s_r(u,\alpha)$ in  
\[X \cdot \pi\left(\left(\begin{array}{cc} 1 & u \\ 0 & 1 \end{array} \right) \right)v_{K_\alpha}  = s_r(u,\alpha) \cdot x^ry^r \]
and the action on $v_H$ of 
\[Y_u = \pi\left( \left(\begin{array}{cc} 1 & u \\ 0 & 1 \end{array} \right) \right) \cdot Y \cdot \pi \left( \left(\begin{array}{cc} 1 & -u \\ 0 & 1 \end{array} \right) \right),\]
which should be a multiple, say $t_r(u,\alpha)$, of 
\[v_{K_{\alpha,u}} = \pi\left( \left(\begin{array}{cc} 1 & u \\ 0 & 1 \end{array} \right) \right) v_K = (\alpha x^2 - (ux+y)^2)^r.\]

First we compute $t_r(u,\alpha)$. Note that
\begin{align*}
Y_u \cdot x^ry^r &= \left[ \left(\begin{array}{cc} u & \alpha - u^2 \\ 1 & -u \end{array}\right) \cdot x^ry^r + \sum_{z \in \mathbb F_p} \left(\begin{array}{cc} 1+uz & \alpha z - u^2 z \\ z & 1-uz \end{array}\right) \cdot x^ry^r   \right] 
\end{align*}
\begin{align*}
=  \left[ (-\alpha)^{p-1-r} (ux+y)^r ((\alpha-u^2)x-uy)^r + \sum_{z \in \mathbb F_p} \frac{((1+uz)x+zy)^r((\alpha z-u^2 z)x+(1-uz)y)^r}{(1-\alpha z^2)^r} \right].
\end{align*}
Equating this expression to 
\[t_r(u,\alpha) \cdot (\alpha x^2 - (ux+y)^2)^r,\]
and looking at the coefficient of $x^{2r}$, we get
\begin{align*}
t_r(u,\alpha) &= (-\alpha)^{p-1-r}u^r+\sum_{z\in \mathbb F_p} \frac{z^r(1+uz)^r}{(1-\alpha z^2)^r} \\
&=  (-\alpha)^{p-1-r}u^r+ \sum_{z \in \mathbb F_p^\times} (z^{-1}-\alpha z)^{p-1-r}(1+uz)^r  \\
& = - \displaystyle{\sum_{j \in S_{p-1-r}}} (-1)^{(p-1-r-j)/2}{p-1-r \choose (p-1-r-j)/2} {r \choose j}\alpha^{(p-1-r-j)/2} u^j
\end{align*}
where, as in \S \ref{s-legendre},
\[S_k = 
\begin{cases}
\{0, 2, \dots, k\} &\text{if $k$ is even,} \\
\{1,3, \dots, k\} &\text{if $k$ is odd.}
\end{cases}
\]
Note that when $u = 0$, we have $t_r(u,\alpha) = t$ (cf. (\ref{eq2})). Noting that 
\[{r \choose j} \equiv (-1)^j {p-1-r+j \choose j} \mod p,\] it follows from (\ref{legendre-1}), as $j$ and $p-1-r$ are of the same parity, that
\begin{equation}\label{eq-tru}
t_r(u,\alpha) \equiv -(-1)^{p-1-r}2^{p-1-r}\alpha^{(p-1-r)/2}P_{p-1-r}(u/\sqrt{\alpha}) \mod p.
\end{equation}

Next we compute $s_r(u,\alpha)$. We have
\[X \cdot v_{K_{\alpha,u}} = s_r(u,\alpha) \cdot v_H.\]
Note that
\[
\left(\begin{array}{cc} a & 0 \\ 0 & 1 \end{array} \right) \cdot (\alpha x^2-y^2)^r = (\alpha a x^2-a^{-1}y^2)^r,\]
and therefore
\begin{align*}
X \left(\begin{array}{cc} 1 & u \\ 0 & 1 \end{array} \right) v_K &= -\displaystyle{\sum_{a \in \mathbb F_p^\times}} \left(\begin{array}{cc} 1 & a u \\ 0 & 1 \end{array} \right)  (\alpha a x^2-a^{-1}y^2)^r \\
&= -\displaystyle{\sum_{a \in \mathbb F_p^\times}} (\alpha a x^2-a^{-1}(aux+y)^2)^r \\
&= - \displaystyle{\sum_{a \in \mathbb F_p^\times}} ((\alpha - u^2) a x^2 - 2 u xy - a^{-1}y^2)^r \\ 
&= s_r(u,\alpha) \cdot x^ry^r,
\end{align*}
where, noting that $i$ and $r$ are of the same parity, 
\[s_r(u,\alpha) = (-1)^r
\displaystyle{\sum_{i \in S_r}} (-1)^{(r-i)/2} {r \choose i}{r-i \choose (r-i)/2}(\alpha - u^2)^{(r-i)/2} (2u)^i. 
\] In the above computation we started with a negative sign to accommodate $|H| = p-1$ in the definition of $X$. 
Note that when $u = 0$, we have $s_r(u,\alpha) = s$ (cf. (\ref{eq1})). It follows from (\ref{legendre-2}) that
\begin{equation}\label{eq-sru}
s_r(u,\alpha) \equiv (-1)^r 2^r \alpha^{r/2} P_r(u/\sqrt{\alpha}) \mod p.
\end{equation}

As outlined in \S \ref{recap}, the correlation coefficient is the product $s_r(u,\alpha)t_r(u,\alpha)$. Thus, combining the identities (\ref{eq-tru}) and (\ref{eq-sru}), and making use of (\ref{symmetry}), we have proved:
\begin{proposition}\label{prop-2}
Let $\pi$ be an irreducible representation of ${\rm PGL}_2(\mathbb F_p)$ over $\overline{\mathbb Q}_p$ admitting fixed vectors for both $H$ and $K_{\alpha,u}$.  Thus, $\pi={\rm Ps}(\chi^r,\chi^{-r})$ or $\pi=\pi(\psi^{(p-1)(r+1)})$ or $\pi = {\rm St}\otimes \eta$ in which case we take $r = (p-1)/2$. Then the correlation coefficient $c(\pi;H,K_{\alpha,u})$ belongs to ${\mathbb Z}_p$ and \[c(\pi;H,K_{\alpha,u}) \equiv [P_r(u/\sqrt{\alpha})]^2 \mod p.\]
\end{proposition}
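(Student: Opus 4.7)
The plan is to invoke the mod-$p$ reduction recalled in Section \ref{recap}: by combining Lemma \ref{lemma-elementary} with its Brauer-character analogue from \cite[Lemma 5.1]{aj22}, and using Proposition \ref{prop-future} to isolate the Jordan-H\"older constituent of $\pi_r \bmod p$ that carries both the $H$- and the $K_{\alpha,u}$-invariant lines, the class of $c(\pi_r;H,K_{\alpha,u})$ modulo $p$ equals the product $s_r(u,\alpha) \cdot t_r(u,\alpha)$. Here $s_r(u,\alpha)$ and $t_r(u,\alpha)$ are the scalars describing how the averaging projectors $X$ and $Y_u$ act on the relevant vectors in the symmetric-power model $\rho_r$. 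Since both scalars have been computed explicitly in (\ref{eq-sru}) and (\ref{eq-tru}), only an elementary simplification remains.

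Concretely, combining (\ref{eq-sru}) and (\ref{eq-tru}) gives
\[s_r(u,\alpha) \cdot t_r(u,\alpha) \equiv -(-1)^r (-1)^{p-1-r} \cdot 2^{p-1} \cdot \alpha^{(p-1)/2} \cdot P_r(u/\sqrt{\alpha}) \cdot P_{p-1-r}(u/\sqrt{\alpha}) \mod p.\]
I would then collapse each factor in turn: the symmetry (\ref{symmetry}) converts the product of Legendre values into the square $[P_r(u/\sqrt{\alpha})]^2$; Fermat's little theorem eliminates $2^{p-1}$; and since $\alpha \in \mathbb F_p^\times \setminus \mathbb F_p^{\times 2}$, Euler's criterion gives $\alpha^{(p-1)/2} \equiv -1 \mod p$. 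A short sign check, namely $-(-1)^{p-1} \cdot (-1) = (-1)^{p+1} = 1$ for odd $p$, shows the overall scalar is $+1$, yielding the claimed congruence.

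The substantive step, already carried out in the excerpt above, is the identification of $s_r$ and $t_r$ as scalar multiples of Legendre-polynomial values. For $s_r$, one expands $((\alpha-u^2)ax^2 - 2uxy - a^{-1}y^2)^r$, sums over $a \in \mathbb F_p^\times$, reads off the coefficient of $x^r y^r$, and matches the resulting binomial sum against the expansion (\ref{legendre-2}) of $P_r$. For $t_r$ the key trick is the congruence $z^r/(1-\alpha z^2)^r \equiv (z^{-1} - \alpha z)^{p-1-r} \bmod p$ on $\mathbb F_p^\times$ (via $z^{p-1} \equiv 1$), together with $\binom{r}{j} \equiv (-1)^j \binom{p-1-r+j}{j} \bmod p$, which rewrite the sum into the form of (\ref{legendre-1}) for $P_{p-1-r}$. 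I expect the delicate part to be this shape-matching: keeping the parity of the summation index $i$ (or $j$), the signs, and the power-of-$2$ normalization aligned with the chosen form of the Legendre polynomial. Once those bookkeeping issues are in hand, the concluding manipulation in the previous paragraph finishes the proof.
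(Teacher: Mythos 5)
Your proposal follows the paper's proof essentially verbatim: reduce mod $p$ via the (Brauer) character double sum to identify $c(\pi_r;H,K_{\alpha,u}) \bmod p$ with $s_r(u,\alpha)t_r(u,\alpha)$ on the distinguished Jordan--H\"older constituent, compute both scalars by expanding the averaged actions and matching against (\ref{legendre-1}) and (\ref{legendre-2}), and then clear the constants using $2^{p-1}\equiv 1$, Euler's criterion $\alpha^{(p-1)/2}\equiv -1$, and the symmetry (\ref{symmetry}). Your sign bookkeeping $-(-1)^{p-1}\cdot(-1)=1$ and the key manipulations (the rewriting $z^r/(1-\alpha z^2)^r \equiv (z^{-1}-\alpha z)^{p-1-r}$ and the congruence $\binom{r}{j}\equiv(-1)^j\binom{p-1-r+j}{j}$) are exactly those of the paper, so the argument is correct and not materially different.
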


\begin{remark}\label{rmk-symmetry}
The symmetry around $(p-1)/2$ stated as (\ref{symmetry}) can in fact be seen as  follows. If $\pi$ is thought of as a representation of ${\rm PGL}_2(\mathbb F_p)$ over $\mathbb C$ then the complex numbers $s$ and $t$ determined by the corresponding averaging operators on $\pi$ satisfies $\overline{s} = t$ \cite[\S 2]{aj22}. Since complex conjugation, after an identification with $\overline{\mathbb Q}_p$, corresponds to the involution 
$r \mapsto p-1-r$ on the weights in reduction mod $p$ it follows that one of them is obtained by the other.
\end{remark}

\begin{remark}[Archimedean analogue]\label{arch}
The proof of Proposition \ref{prop-2} is purely algebraic and it goes through for finite dimensional representations $\pi_r = {\rm Sym}^{2r} \mathbb R^2  \otimes \det^{-r}$ of ${\rm PGL}_2(\mathbb R)$. Thus we get, for $H =\mathbb R^\times$, $K = \mathbb C^\times$ and $u \in \mathbb R$, $|\langle v_H, v_{K_u} \rangle |^2  = P_r(\imath u)^2$.
\end{remark}

\subsection{The case $r_i = 0$ for $i \geq 1$} 

In this subsection, $q=p^m$ but we assume $r_i =0$ for $i \geq 1$. Now $u \in \mathbb F_q$ and $\alpha \in \mathbb F_q^\times \setminus \mathbb F_q^{\times 2}$. Denoting $r_0$ as $r$, everything goes through in \S \ref{sub1} with 
\begin{equation}\label{eq-tru-1}
t_r(u,\alpha) \equiv -(-1)^{q-1-r}2^{q-1-r}\alpha^{(q-1-r)/2}P_{q-1-r}(u/\sqrt{\alpha}) \mod p.
\end{equation}
instead of (\ref{eq-tru}). The factor $s_r(u,\alpha)$ has the same expression which we state again:
\begin{equation}\label{eq-sru-1}
s_r(u,\alpha) \equiv (-1)^r 2^r \alpha^{r/2} P_r(u/\sqrt{\alpha}) \mod p.
\end{equation} 
Thus, 
\[s_r(u,\alpha)t_r(u,\alpha) = P_r(u/\sqrt{\alpha}) P_{q-1-r}(u/\sqrt{\alpha}). \]
Since $r=r_0$, we have $q-1-r = (p-1-r)+\sum_{i=0}^{m-1} (p-1)p^i$ and therefore by (\ref{ille-schur}) and (\ref{symmetry}) we see that $P_{q-1-r}(x)  \equiv P_r(x) \mod  p$. Thus, once again,
\[s_r(u,\alpha)t_r(u,\alpha) = [P_r(u/\sqrt{\alpha})]^2  \mod p.\]

\subsection{The general case} We follow the strategy in \cite[\S 5.2]{aj22}. As mentioned in \S \ref{recap}, for $q=p^m$, we are led to analyse a representation of ${\rm PGL}_2(\mathbb F_q)$ of the form
\[ \rho_r = \bigotimes_{i=0}^{m-1} \left( {\rm Sym}^{2r_i} \overline{\mathbb F}_p^2 \otimes \det{^{-r_i}} \overline{\mathbb F}_p^2 \right) \circ {\rm Frob}^i. \] 
In this case, we have 
\[v_H = \bigotimes_{i=0}^{m-1} x_i^{r_i}y_i^{r_i}\]
and
\[v_{K_{\alpha,u}} = \bigotimes_{i=0}^{m-1} (\alpha^{p^i} x_i^2-(u^{p^i}x_i+y_i)^2)^{r_i}.\]
With $X,Y_u,s_r(u,\alpha),t_r(u,\alpha)$ defined as earlier and by denoting the corresponding scalars for the $i$-th component by $s_i(u,\alpha)$ and $t_i(u,\alpha)$, we get 
\begin{align*}
s_r(u,\alpha)t_r(u,\alpha) &= \prod_{i=0}^{m-1} s_i(u,\alpha)^{p^i}t_i(u,\alpha)^{p^i} \\
&= \prod_{i=0}^{m-1} [P_{r_i}(u/\sqrt{\alpha})]^{2p^i} \\
&= [P_r(u/\sqrt{\alpha})]^2
\end{align*}
where the last step is a consequence of the Ille-Schur congruence (cf. (\ref{ille-schur})). This proves Theorem \ref{thm1}.

\section{Simultaneous nonvanishing}\label{simul}

In this section, we prove Theorem \ref{thm2}  in which we have assumed that $m \geq 2$. In order to get simultaneous nonvanishing of the correlation coefficient for all the irreducible representations with fixed vectors for $H$ and $K$, we need to exhibit $u \in \mathbb F_q$ such that 
\[P_r(u/\sqrt{\alpha}) \not\equiv 0 \mod p\]
for all $0 \leq r \leq (q-1)/2$. By the Ille-Schur and symmetry congruences (\ref{ille-schur})  and (\ref{symmetry}) for Legendre polynomials, $P_r(x) \mod p$, for any $r$, is completely determined by $P_i(x) \mod p$ for $0 \leq i \leq (p-1)/2$. Thus, to prove Theorem \ref{thm2}, we only need to note that the product of these $(p-1)/2$ polynomials, say,
\[P(x) = \prod_{i=0}^{(p-1)/2} P_i(x)\] 
does not have a factor of the form $(x^2-c)$ for some $c \in \mathbb F_q^\times \setminus \mathbb F_q^{\times 2}$. Indeed, if this is the case then we can take $u = \sqrt{\alpha c} \in \mathbb F_q$. As these polynomials have terms only of even degree or only of odd degree, considering $P(x)/x$ if necessary, we can treat $P(x)$ as a polynomial in $x^2$. Note that the degree of $P(t=x^2)$ is $\lfloor(p^2-1)/16\rfloor$ and $\mathbb F_q$ has $(p^m-1)/2$ quadratic non-residues and thus we are done if $m \geq 2$.

\section{Legendre Polynomials mod $p$}\label{legendre-p}

As mentioned in \S \ref{intro}, the reduction modulo $p$ of Legendre polynomials is connected to number theory \cite{bm04}. More specifically, the reduction mod $p$ of the Legendre  polynomial in degree $(p-1)/2$,  the one corresponding to the correlation coefficient for  the Steinberg representation  of ${\rm  PGL}_2(\mathbb F_p)$, is related  to the Hasse invariant  for  elliptic curves.  It is  known  that this particular Legendre polynomial has only linear and  quadratic  factors in its reduction modulo $p$ and  there are precise formulas of the number of factors of each type which involve  the class number of $\mathbb  Q(\sqrt{-p})$ \cite[Theorem 1]{bm04}.

In this  section, we make a  few observations  about Legendre polynomials which are consequences of Theorem \ref{thm1}. 

For the first set of observations, we  combine  Theorem \ref{thm1} with the result due to Vilenkin and Dieudonn\'e mentioned in the introduction. Applying this result  to each element in $K_{\alpha,u}$ and taking the sum, on the one side we get the correlation coefficient and on the other side a sum of $(p+1)$ many explicit matrix coefficients. This leads to the identity
\begin{equation}\label{vd}
P_r(x)^2 \equiv 1+P_r(2x^2-1)+2\sum_{y \in \mathbb F_p^\times \setminus \mathbb F_p^{\times  2}} P_r\left(\frac{y+1-2x^2}{y-1} \right) \mod p,
\end{equation}
where the  sum ranges over quadratic non-residues in $\mathbb F_p^\times$.
Expanding $1/(y-1)$ as $(y-1)^{p-2}$ and taking the sum over quadratic non-residues we get the identity
\[P_r(x)^2 = \sum_{i \in S_r} a_i Q_i(2x^2-1)\]
where $P_r(x) = \sum_{i \in S_r}a_ix^i$ and
\[Q_i(x) = \sum_{j=0}^i (-1)^j {i \choose j} {i - j -\frac{1}{2} \choose i} x^j.\] This is initially obtained modulo $p$ but it lifts to characteristic zero. Since, $Q_i(x) = x^i P_i(\frac{x+x^{-1}}{2})$ (see \cite[p. 1]{kel59}), we get:
\begin{equation}\label{fe}
P_r(x)^2 = \sum_{i \in S_r} a_i (\sqrt{2x^2-1})^i P_i\left(\frac{x^2}{\sqrt{2x^2-1}} \right).
\end{equation}

The approach of \cite{bm04} gives  information on reduction modulo $p$ also of $P_r(x)$ for $r=(p-e)/k$ for $k = 3,4$ where $p \equiv e \mod k$. This is because of certain congruences between these Legendre polynomials and the one for $(p-1)/2$ \cite[p. 81]{bm04}. In degrees other than these, there does  not seem to be any pattern in the  reduction modulo  $p$ of $P_r(x)$. 

As a consequence of the present work, it is easy to derive the following two identities.  Probably these are well-known. 
\begin{lemma}\label{lem-legendre}
Modulo p, we have
\begin{enumerate}
\item $\displaystyle{\sum_{r=0}^{p-1}} P_r(x)  = \left( \frac{-2}{p}\right)(x-1)^{(p-1)/2}$,
\item $\displaystyle{\sum_{r=0}^{p-1}} P_r(x)^2  =  (x^2-1)^{(p-1)/2}$.
\end{enumerate} 
\end{lemma}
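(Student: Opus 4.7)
My plan is to derive both identities from a single master congruence in $\mathbb{F}_p[x][t]$, namely
\[
(1-2xt+t^2)^{(p-1)/2} \equiv \sum_{r=0}^{p-1} P_r(x)\,t^r \mod p,
\]
and then to specialize it in two different ways.  To prove this master congruence, I would multiply the Legendre generating function $(1-2xt+t^2)^{-1/2} = \sum_{r\ge 0} P_r(x)\,t^r$ by $(1-2xt+t^2)^{p/2}$, noting that in characteristic $p$ one has $(1-2xt+t^2)^{p/2} = \sqrt{(1-2xt+t^2)^p} = \sqrt{1 - 2x^p t^p + t^{2p}}$, which as a formal power series in $t$ starts with $1$ and has no nonzero terms of degree below $p$, hence is congruent to $1$ modulo $t^p$.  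This gives the master congruence modulo $(p, t^p)$, and since the left-hand side is a polynomial of degree $p-1$ in $t$, the mod-$t^p$ statement upgrades to an honest polynomial identity modulo $p$.

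For identity (1), I would set $t=1$ in the master congruence, obtaining $\sum_{r=0}^{p-1} P_r(x) \equiv 2^{(p-1)/2}(1-x)^{(p-1)/2} \mod p$.  Euler's criterion gives $2^{(p-1)/2} \equiv \left(\tfrac{2}{p}\right)$, and $(1-x)^{(p-1)/2} = \left(\tfrac{-1}{p}\right)(x-1)^{(p-1)/2}$; multiplying these yields the claimed $\left(\tfrac{-2}{p}\right)(x-1)^{(p-1)/2}$.

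For identity (2), I would square the master congruence and compare coefficients of $t^{p-1}$.  The left side contributes $\sum_{r=0}^{p-1} P_r(x) P_{p-1-r}(x)$, which collapses to $\sum_{r=0}^{p-1} P_r(x)^2 \mod p$ by the symmetry (\ref{symmetry}).  On the right, since $(1-2xt+t^2)^p \equiv 1 - 2x^p t^p + t^{2p} \mod p$, I would write
\[
(1-2xt+t^2)^{p-1} \equiv (1 - 2x^p t^p + t^{2p}) \sum_{n\ge 0} U_n(x)\,t^n \mod p,
\]
where $U_n(x)$ are the coefficients of $1/(1-2xt+t^2)$, so that the coefficient of $t^{p-1}$ is simply $U_{p-1}(x)$.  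To finish I would use the Binet-type formula $U_{p-1}(x) = (\alpha^p - \beta^p)/(\alpha-\beta)$ with $\alpha, \beta = x \pm \sqrt{x^2-1}$ together with the Frobenius identity $(x \pm \sqrt{x^2-1})^p \equiv x^p \pm (x^2-1)^{(p-1)/2}\sqrt{x^2-1} \mod p$, giving $U_{p-1}(x) \equiv (x^2-1)^{(p-1)/2} \mod p$.

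The one point requiring care is the formal square-root step that establishes $(1-2xt+t^2)^{p/2} \equiv 1 \mod t^p$ in characteristic $p$; once that is justified, the rest is a routine assembly of Euler's criterion, the Legendre symmetry, and Frobenius on the quadratic $(1-2xt+t^2)$.
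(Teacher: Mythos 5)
Your proof is correct, but it takes a genuinely different route from the paper. The paper obtains both identities as applications of its representation-theoretic machinery: identity (1) comes from summing the explicit character-sum formula of Proposition \ref{prop-ip} over all characters $\chi$ of $\mathbb F_p^\times$ and invoking orthogonality of characters (the count of $\lambda$ with $\alpha-(\lambda+u)^2=\lambda$ being governed by the discriminant $4u+4\alpha+1$), while identity (2) comes from evaluating the character double sum of Lemma \ref{lemma-elementary} on the Gelfand--Graev representation, i.e.\ from counting unipotent elements in $H\,K_{\alpha,u}$ and then comparing with Theorem \ref{thm1}. You instead prove everything from the single classical congruence $(1-2xt+t^2)^{(p-1)/2}\equiv\sum_{r=0}^{p-1}P_r(x)t^r \bmod p$, which you establish correctly: the formal square root $(1-2xt+t^2)^{p/2}$, taken with constant term $1$, squares to $1-2x^pt^p+t^{2p}$ by Frobenius and is therefore $\equiv 1 \bmod t^p$, and since the left-hand side has $t$-degree $p-1$ the truncated congruence is the full polynomial identity. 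Setting $t=1$ and using Euler's criterion gives (1); extracting the coefficient of $t^{p-1}$ from the square, using the symmetry $P_{p-1-r}\equiv P_r$, and evaluating the Chebyshev coefficient $U_{p-1}(x)=(\alpha^p-\beta^p)/(\alpha-\beta)=(x^2-1)^{(p-1)/2}$ via Frobenius on $\mathbb F_p(x)[\sqrt{x^2-1}]$ gives (2). Your argument is shorter, self-contained, and confirms the paper's suspicion that these identities are ``well-known''; what it does not provide, and what the paper's longer derivation is really after, is the demonstration that the correlation-coefficient formalism (toric character sums, the Gelfand--Graev representation) reproduces these identities as a consistency check on Theorem \ref{thm1}.
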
  
The main ingredient in proving the second identity is the character  relation stated in Lemma \ref{lemma-elementary}. For the first identity,  we need an appropriately modified version of \cite[Proposition 7.1]{aj22} which  we recall below.
\begin{proposition}\label{prop-ip}  
Let $v_H$ and $v_K$ denote respectively the $H$ and  $K$ fixed vectors (normalized)  in  the principal series representation $\pi=  {\rm Ps}(\chi,\chi^{-1})$ of ${\rm PGL}_2(\mathbb F_q)$. Let $\alpha \in \mathbb F_q \setminus \mathbb F_q^{\times 2}$. Then, we have
\[\langle v_H, \left(\begin{array}{cc} 1 & u \\  0 &  1 \end{array} \right)  \cdot v_K \rangle_\pi = - \sum_{\lambda \in \mathbb F_q^\times}\chi\left(\frac{\alpha -(\lambda+u)^2}{\lambda} \right).\]
 \end{proposition}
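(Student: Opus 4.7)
The plan is to work in the standard induced-representation model $\pi = {\rm Ps}(\chi, \chi^{-1}) = \operatorname{Ind}_B^G \tilde\chi$, where $B$ is the upper Borel of ${\rm PGL}_2(\mathbb F_q)$ and $\tilde\chi$ is the character of the diagonal torus given by $\chi(a/d)$. Functions in $\pi$ are determined by their values on the Bruhat coset representatives $\{e\} \cup \{w n_z : z \in \mathbb F_q\}$, with $w$ the Weyl element and $n_z$ the upper unipotent with top-right entry $z$.

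The first step is to determine $v_H$ and $v_K$ explicitly on these representatives. A direct computation of Iwasawa decompositions of $w n_z h$ for $h \in H$ shows that $H$-invariance forces $v_H(e) = v_H(w) = 0$, and $v_H(w n_\lambda) = \chi(\lambda)^{-1}$ (up to a scalar) for $\lambda \in \mathbb F_q^\times$. Similarly, imposing $K$-invariance under both the unipotent-like subgroup of $K$ and the extra Weyl-type element of $K$ pins down $v_K$, yielding $v_K(e) = 1$, $v_K(w) = \chi(\alpha^{-1})$, and $v_K(w n_\lambda) = \chi(\alpha - \lambda^2)^{-1}$ for $\lambda \in \mathbb F_q^\times$. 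The relative normalization of $v_H$ and $v_K$ is fixed by matching the $u = 0$ formula from \cite[Proposition 7.1]{aj22}, which is the source of the overall sign $-1$ in the final expression.

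The second step is a direct evaluation. Using the convention $(\pi(n_u) v_K)(g) = v_K(g n_u)$, the $G$-invariant Hermitian form expands as
\[
\langle v_H, \pi(n_u) v_K \rangle_\pi = v_H(e)\,\overline{v_K(n_u)} + \sum_{z \in \mathbb F_q} v_H(w n_z)\,\overline{v_K(w n_{z+u})},
\]
of which only the terms with $z \in \mathbb F_q^\times$ contribute. Splitting off the term $z = -u$ (where $v_K(w n_0) = v_K(w) = \chi(\alpha^{-1})$), re-indexing $\lambda = z$, and recombining everything into a single sum over $\mathbb F_q^\times$ gives the asserted identity.

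The main work is the Iwasawa-decomposition bookkeeping in step one; I do not anticipate a serious obstacle, since the calculation is essentially the $u = 0$ computation of \cite[Proposition 7.1]{aj22}, with the translation by $n_u$ entering cleanly through the shift $z \mapsto z + u$ in the argument of $v_K$.
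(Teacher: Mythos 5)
Your argument is correct in substance, but note that the paper itself supplies no proof of this proposition: it is explicitly presented as a recollection of \cite[Proposition 7.1]{aj22}, ``appropriately modified'' by inserting the unipotent translation by $u$. So your Bruhat/Iwasawa computation in the induced model $\operatorname{Ind}_B^G\tilde\chi$ is filling in an argument the paper delegates to the reference rather than diverging from it; it is the standard and natural route, and the details check out. The values $v_H(e)=v_H(w)=0$, $v_H(wn_\lambda)=\chi(\lambda)^{-1}$ for $\lambda\neq 0$, and $v_K(wn_\lambda)=\chi(\alpha-\lambda^2)^{-1}$ are correct, and the last formula is already valid at $\lambda=0$ (it returns $v_K(w)=\chi(\alpha)^{-1}$), so you do not actually need to split off the term $z=-u$: the translation enters exactly through the shift $z\mapsto z+u$, the term $v_H(e)\,\overline{v_K(n_u)}$ vanishes, and the single sum over $\mathbb F_q^\times$ drops out immediately.

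The one point to tighten is the sign and the meaning of ``normalized''. If $v_H$ and $v_K$ were unit vectors the stated identity could not hold exactly, since the right-hand side is a Weil-type character sum of absolute value typically about $\sqrt q$, while $|\langle v_H,v_K\rangle_\pi|\le 1$ by Cauchy--Schwarz. So the normalization must be the model normalization of \cite{aj22}, and the overall $-1$ is an artifact of that convention (compare the factor $1/|H|=1/(q-1)\equiv -1 \bmod p$ built into the averaging operator $X$, and the paper's own remark in \S 4.1 that a negative sign is inserted ``to accommodate $|H|=p-1$''). Fixing the constant by matching the $u=0$ case of \cite[Proposition 7.1]{aj22} is therefore legitimate and mirrors what the paper does, but a fully self-contained proof would need to state the normalization convention explicitly rather than import it.
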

\begin{proof}[Proof of Lemma \ref{lem-legendre}]
Modulo $p$, the left  hand side of the identity in Proposition \ref{prop-ip}  is (cf. (\ref{eq-sru}))
\[s_r(u, \alpha) = (-1)^r2^r\alpha^{r/2}P_r(u/\sqrt{\alpha}),\]
if $\chi$ mod $p$ is the $r$-th power of the identity  character of  $\mathbb F_p^\times$. Now,
\begin{align*}
-\sum_{\chi \in \widehat{\mathbb F_p^\times}} \sum_{\lambda \in \mathbb F_p^\times}\chi\left(\frac{\alpha -(\lambda+u)^2}{\lambda} \right) &= -\sum_{\lambda \in \mathbb F_p^\times} \left(\sum_{\chi \in \widehat{\mathbb F_p^\times}}  \chi\left(\frac{\alpha -(\lambda+u)^2}{\lambda} \right)\right) \\
&=  \begin{cases}
0 &\text{$4u+4\alpha+1 \notin \mathbb F_p^{\times 2}$,} \\
-(p-1) &\text{$4u+4\alpha+1 =0$,} \\
-2(p-1)  &\text{$4u+4\alpha+1 \in  \mathbb F_p^{\times 2}$.} 
\end{cases}
\end{align*}
Thus,
\[\sum_{r=0}^{p-1} s_r(u,\alpha)  = -1+  \sum_{r=0}^{p-2} s_r(u,\alpha) = (4u+4\alpha+1)^{\frac{p-1}{2}}.   \]
We may treat this as a formal identity and plug in $\alpha =  1/4$ to get
\[\sum_{r=0}^{p-1} P_r(-2u) = (4u+2)^{\frac{p-1}{2}}. \] Put $x=-2u$ and the first identity follows.

For the proof of the second  identity, we make use of the Gelfand-Graev representation of ${\rm PGL}_2(\mathbb  F_q)$. It is well-known that this representation has only two non-zero character values and these are given by $(q^2-1)$ at the identity and $-1$ at the (unique) unipotent conjugacy class. It can be checked that the cardinality of 
\[\left\{(h,k_\alpha) \mid h\left(\begin{array}{cc} 1 & u \\ 0 & 1 \end{array}\right)k_\alpha \left(\begin{array}{cc} 1 & -u \\ 0 & 1 \end{array}\right) \mbox{~is unipotent} \right\} \]
is given by (cf. \cite[Remark 17]{aj22})
\[ \begin{cases}
q-1 &\text{if $p \equiv 1 \mod 4$ ~\&~ $\alpha - u^2 \notin \mathbb F_q^{\times 2}$,} \\
q-3 &\text{if $p \equiv 3 \mod 4$ ~\&~ $\alpha - u^2 \notin \mathbb F_q^{\times 2}$,} \\
q-3 &\text{if $p \equiv 1 \mod 4$ ~\&~ $\alpha - u^2 \in \mathbb F_q^{\times 2}$,}  \\
q-1 &\text{if $p \equiv 3 \mod 4$ ~\&~ $\alpha - u^2 \in \mathbb F_q^{\times 2}$.} 
\end{cases}\]  
Thus, the character double sum on the right hand side of the identity in Lemma \ref{lemma-elementary} for the Gelfand-Graev representation gives the value
\[
\begin{cases}
q/(q+1) &\text{in Cases I and IV,} \\
(q^2-q+2)/(q^2-1) &\text{in Cases II and III. }
\end{cases}
\]
Since the Gelfand-Graev representation contains every irreducible representation other than $1$ and $\eta$ with multiplicity one, and noting that the Steinberg representation does not have a $K$-fixed vector, it follows again from Lemma \ref{lemma-elementary} that 
\[\sum_{\pi} | \langle v_H, v_{K_{\alpha,u}} \rangle |_\pi^2 = 
\begin{cases}
0 &\text{in Cases I and IV,} \\
-2 &\text{in Cases II and III,}
\end{cases}
\] 
modulo  $p$. By Theorem \ref{thm1}, the left hand side is 
\[P_{\frac{p-1}{2}}(u/\sqrt{\alpha})^2+2\sum_{r=1}^{(p-3)/2}  P_r(u/\sqrt{\alpha})^2\]
and therefore it follows from (\ref{symmetry}) that
\[\sum_{r=0}^{p-1} P_r(u/\sqrt{\alpha})^2 = -(u^2-\alpha)^{(p-1)/2}.\]
Taking $x=u/\sqrt{\alpha}$ and noting that $\alpha^{\frac{p-1}{2}} = -1$, we get the second identity. 
\end{proof}
\begin{remark}
It is a pleasant exercise to sum Equation (\ref{vd}) over $0 \leq r \leq  p-1$ and verify the resulting equality by making use  of the two identities of Lemma \ref{lem-legendre}.
\end{remark}
\begin{remark}
In light of Lemma \ref{lem-legendre}, it is natural to look at "the higher moments" given by $G_i(x) = \sum_{r=0}^{p-1} P_r(x)^i$. It seems to be the case that $G_3(x)$ admits only linear and quadratic factors in its reduction modulo $p$. 
\end{remark}   
\begin{remark}
The expectation stated in the introduction, which we proved in \S \ref{simul} only under the assumption that $q > p$,  is equivalent to the assertion that $G_{2(p-1)}$ admits a  factor of the form  $(x^2-c)$ for some $c \in \mathbb F_q^\times \setminus \mathbb  F_q^{\times 2}$, in its reduction modulo $p$. We are unable to verify this in this paper for $q=p$.
\end{remark}
 
\section*{Acknowledgements} 

The author would like to warmly thank Dipendra Prasad for suggesting this problem and for many helpful conversations and encouragement. Thanks  are  due to Arindam Jana for a careful reading of  the manuscript  and for his comments.

%\bibliographystyle{amsalpha}
%\bibliography{newform}

\end{document}